
\documentclass[12pt,reqno]{amsart}

\usepackage{amsmath,amssymb,amsfonts,amsthm,latexsym,graphicx,multirow,enumerate,hyperref,todonotes}

\oddsidemargin=0.3in
\evensidemargin=0.3in
\topmargin=-0.2in
\textwidth=15.4cm
\textheight=23cm 

\newcommand{\Aut}{\mathrm{Aut}}

\newcommand{\calC}{\mathcal{C}}

\newtheorem{theorem}{Theorem}[section]
\newtheorem{lemma}[theorem]{Lemma}
\newtheorem{proposition}[theorem]{Proposition}

\newtheorem{problem}[theorem]{Problem}
\newtheorem{conjecture}[theorem]{Conjecture}
\newtheorem{question}[theorem]{Question}
\theoremstyle{definition}
\newtheorem{definition}[theorem]{Definition}



\long\def\delete#1{}


\usepackage{color}

\definecolor{Blue}{rgb}{0,0,1}
\definecolor{Red}{rgb}{1,0,0}
\definecolor{DarkGreen}{rgb}{0,0.6,0}
\definecolor{DarkYellow}{rgb}{1,1,0.2}
\definecolor{DarkPurple}{rgb}{.6,0,1}

\usepackage{xcolor}
\usepackage[normalem]{ulem}


\begin{document}
\openup 0.5\jot

\title[Stability of graph pairs]{Stability of graph pairs involving vertex-transitive graphs}

\author[Qin]{Yan-Li Qin}
\address{School of Statistics\\Capital University of Economics and Business\\Beijing, 100070\\P.R. China}
\email{ylqin@cueb.edu.cn}

\author[Xia]{Binzhou Xia}
\address{School of Mathematics and Statistics\\The University of Melbourne\\Parkville, VIC 3010\\Australia}
\email{binzhoux@unimelb.edu.au}

\author[Zhou]{Sanming Zhou}
\address{School of Mathematics and Statistics\\The University of Melbourne\\Parkville, VIC 3010\\Australia}
\email{sanming@unimelb.edu.au}

\begin{abstract}
A pair of graphs $(\Gamma,\Sigma)$ is said to be stable if the full automorphism group of $\Gamma\times\Sigma$ is isomorphic to the product of the full automorphism groups of $\Gamma$ and $\Sigma$ and unstable otherwise, where $\Gamma\times\Sigma$ is the direct product of $\Gamma$ and $\Sigma$. 
In this paper, we reduce the study of the stability of any pair of regular graphs $(\Gamma,\Sigma)$ with coprime valencies and vertex-transitive $\Sigma$ to that of $(\Gamma,K_2)$. Since the latter is well studied in the literature, this enables us to determine the stability of any pair of regular graphs $(\Gamma,\Sigma)$ with coprime valencies in the case when $\Sigma$ is vertex-transitve and the stability of $(\Gamma,K_2)$ is known. 

\textit{Key words:} direct product of graphs; stable graph; stable graph pair
\end{abstract}

\maketitle

\section{Introduction}

We only consider finite undirected graphs with no loops or parallel edges. The vertex set, edge set and full automorphism group of a graph $\Gamma$ are denoted by $V(\Gamma)$, $E(\Gamma)$ and $\Aut(\Gamma)$, respectively, and the cardinality of $V(\Gamma)$ is referred to as the order of $\Gamma$. The edge between two adjacent vertices $u, v$ is written as $\{u, v\}$. The complete graph of order $m$ is denoted by $K_m$, and the cycle of order $m$ is denoted by $C_m$. A graph is \emph{vertex-transitive} if its automorphism group is transitive on its vertex set, and a graph is \emph{arc-transitive} if its automorphism group is transitive on its arc set, where an \emph{arc} is an ordered pair of adjacent vertices.

Let $\Gamma$ and $\Sigma$ be graphs. The \emph{direct product} \cite{HIK2011} of $\Gamma$ and $\Sigma$, denoted by $\Gamma\times\Sigma$, is the graph with vertex set $V(\Gamma)\times V(\Sigma)$ and edge set $\{\{(u,x),(v,y)\}: \{u, v\} \in E(\Gamma) \text{ and } \{x, y\} \in E(\Sigma)\}$. It follows from this definition that the direct product $\Aut(\Gamma)\times\Aut(\Sigma)$ of $\Aut(\Gamma)$ and $\Aut(\Sigma)$ is isomorphic to a subgroup of $\Aut(\Gamma\times\Sigma)$.

\begin{definition}
\label{def:st}
A graph pair $(\Gamma,\Sigma)$ is said to be \emph{stable} if $\Aut(\Gamma)\times\Aut(\Sigma)$ is isomorphic to $\Aut(\Gamma\times\Sigma)$ and \emph{unstable} otherwise. 
\end{definition}

This concept in its general form was introduced in \cite[Definition 1.1]{QXZZ2021} as a generalization of the notion of the stability of graphs introduced in \cite{MSZ1989}. In fact, a graph $\Gamma$ is stable in terms of \cite{MSZ1989} if and only if the graph pair $(\Gamma,K_2)$ is stable in terms of Definition \ref{def:st}. 

The stability of graphs is closely related to symmetric $(0, 1)$ matrices \cite{MSZ1989}, regular embeddings of canonical double covers~\cite{NS1996}, two-fold automorphisms of graphs~\cite{LMS2015}, and generalized Cayley graphs~\cite{MSZ1992}. As such it has received considerable attention especially in recent years (see, for example, \cite{FH2022,HMM2021,M2021,QXZ2019,Surowski2001,Surowski2003,Wilson2008}). In~\cite{Wilson2008}, Wilson gave four sufficient conditions for a graph to be unstable and used these results to study the instability of circulant graphs, generalized Petersen graphs, and three other families of graphs. In \cite{QXZ2019}, the authors answered an open question in \cite{Wilson2008} about the stability of arc-transitive circulant graphs and in the meantime constructed an infinite family of counterexamples to a conjecture in~\cite{MSZ1989}. Subsequently, a conjecture of the authors in~\cite{QXZ2019} about unstable circulant graphs of odd order was proved by Fernandez and Hujdurovi\'{c} in~\cite{FH2022}, and in turn an open question in~\cite{FH2022} about the stability of Cayley graphs on abelian groups of odd order was recently answered by Morris~\cite{M2021}. A conjecture in~\cite{Wilson2008} about the stability of generalized Petersen graphs was proved by the authors in~\cite{QXZ2021}. 

Compared with the stability of graphs, there are only few known results on the stability of general graph pairs in the literature. It turns out that the stability of graph pairs is closely related to the following concepts: A graph is called \emph{$R$-thin}~\cite{HIK2011} or \emph{vertex-determining}~\cite{QXZ2019, Wilson2008} if no two vertices have the same neighborhood in the graph, and graphs that are not $R$-thin are said to be \emph{$R$-thick}~\cite{HIK2011}. Two graphs $\Gamma$ and $\Sigma$ are said to be \emph{coprime} if there is no graph $\Delta$ of order greater than $1$ such that $\Gamma\cong\Gamma_1\times\Delta$ and $\Sigma\cong\Sigma_1\times\Delta$ for some graphs $\Gamma_1$ and $\Sigma_1$. 
The following result, which is a special case of \cite[Theorem~8.18]{HIK2011}, gives sufficient conditions for a graph pair to be stable. (For the condition of $\Gamma\times\Sigma$ being connected, non-bipartite and $R$-thin, respectively, in terms of $\Gamma$ and $\Sigma$, the reader is referred to Lemma~\ref{cvn}.)

\begin{theorem}\label{classic} 
Let $\Gamma$ and $\Sigma$ be coprime graphs. If $\Gamma\times\Sigma$ is connected, non-bipartite and $R$-thin, then $(\Gamma, \Sigma)$ is stable.
\end{theorem}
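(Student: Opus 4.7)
The plan is to derive the statement from the unique prime factorization theorem for direct products of graphs as developed in \cite[Chapter~8]{HIK2011}. In the class of connected, non-bipartite, $R$-thin graphs, every graph factors uniquely (up to order and isomorphism) as a direct product of prime graphs. First, I would observe that if $\Gamma\times\Sigma$ is connected, non-bipartite and $R$-thin, then both $\Gamma$ and $\Sigma$ inherit these three properties; this is immediate from the standard characterizations recorded in Lemma~\ref{cvn}. Consequently, $\Gamma$ and $\Sigma$ each admit a unique prime factorization, say $\Gamma\cong\Gamma_1\times\cdots\times\Gamma_k$ and $\Sigma\cong\Sigma_1\times\cdots\times\Sigma_\ell$, and concatenating these yields a prime factorization
\[
\Gamma\times\Sigma\cong\Gamma_1\times\cdots\times\Gamma_k\times\Sigma_1\times\cdots\times\Sigma_\ell
\]
of the product.

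Next, I would use coprimality to separate the two factorizations. The definition of coprime forbids a common non-trivial direct factor, so combined with the uniqueness of prime factorization this forces the multisets $\{\Gamma_1,\dots,\Gamma_k\}$ and $\{\Sigma_1,\dots,\Sigma_\ell\}$ of isomorphism classes to be disjoint. Indeed, if some $\Gamma_i$ were isomorphic to some $\Sigma_j$, then taking $\Delta=\Gamma_i$ and absorbing the remaining primes into $\Gamma_1$ and $\Sigma_1$ would produce the forbidden common factor.

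The heart of the argument is the structural description of $\Aut$ for prime factorizations in this category: an automorphism of a connected, non-bipartite, $R$-thin graph written as its prime factorization is a combination of automorphisms of the individual prime factors together with a permutation of pairwise isomorphic factors. Applying this to $\Gamma\times\Sigma$, any $\varphi\in\Aut(\Gamma\times\Sigma)$ permutes the list $\Gamma_1,\dots,\Gamma_k,\Sigma_1,\dots,\Sigma_\ell$ only within isomorphism classes. Since no $\Gamma_i$ is isomorphic to any $\Sigma_j$, such a permutation stabilizes the block $\{\Gamma_1,\dots,\Gamma_k\}$ setwise and likewise $\{\Sigma_1,\dots,\Sigma_\ell\}$. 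Hence $\varphi$ splits as $(\alpha,\beta)$ with $\alpha\in\Aut(\Gamma)$ and $\beta\in\Aut(\Sigma)$, which shows that the canonical injection $\Aut(\Gamma)\times\Aut(\Sigma)\hookrightarrow\Aut(\Gamma\times\Sigma)$ is also a surjection, and therefore an isomorphism.

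The main obstacle is the automorphism-lifting step: showing that an arbitrary automorphism of $\Gamma\times\Sigma$, a priori merely a bijection of the product vertex set, truly acts as a permutation of prime factors combined with automorphisms of those factors. This is precisely the content of \cite[Theorem~8.18]{HIK2011} and ultimately rests on the cancellation properties enjoyed by connected, non-bipartite, $R$-thin graphs under direct product. Once this structural tool is invoked, the coprimality hypothesis does all the remaining work, so the proof reduces to citing the unique prime factorization machinery and carrying out the disjointness argument above.
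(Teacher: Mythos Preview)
Your proposal is correct, but note that the paper does not actually supply a proof of this theorem: it is stated as a special case of \cite[Theorem~8.18]{HIK2011} and used as a black box. Your write-up essentially unpacks that citation---invoking unique prime factorization for connected, non-bipartite, $R$-thin graphs and the structural description of automorphisms of such products---so it is faithful to what the paper is citing rather than a genuinely different route.

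One small point worth tightening: in step~1 you appeal to Lemma~\ref{cvn} to say that connectedness of $\Gamma\times\Sigma$ forces connectedness of $\Gamma$ and $\Sigma$, but Lemma~\ref{cvn}(c) is phrased in the other direction. The needed implication (a direct product with a disconnected factor is itself disconnected) is of course trivial, but it is not literally what Lemma~\ref{cvn} records.
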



On the other hand, the following result from \cite[Theorem~1.3]{QXZZ2021} gives necessary conditions for a graph pair to be stable.

\begin{theorem}\label{TriviallyStable}
Let $(\Gamma,\Sigma)$ be a stable pair of graphs. Then $\Gamma$ and $\Sigma$ are coprime $R$-thin graphs. Moreover, if in addition both $\Aut(\Gamma)$ and $\Aut(\Sigma)$ are nontrivial groups, then both $\Gamma$ and $\Sigma$ are connected and at least one of them is non-bipartite.
\end{theorem}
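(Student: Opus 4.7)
The plan is to prove each conclusion by contraposition: assuming the stated property fails, I will exhibit an automorphism of $\Gamma\times\Sigma$ not in the image of $\Aut(\Gamma)\times\Aut(\Sigma)$, forcing instability. Since $\Aut(\Gamma)\times\Aut(\Sigma)$ always embeds coordinatewise into $\Aut(\Gamma\times\Sigma)$, stability reduces to the numerical equality $|\Aut(\Gamma\times\Sigma)|=|\Aut(\Gamma)||\Aut(\Sigma)|$, so any strict inequality suffices.

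For the coprime conclusion, suppose $\Gamma\cong\Gamma_1\times\Delta$ and $\Sigma\cong\Sigma_1\times\Delta$ for some $\Delta$ with $|V(\Delta)|\geq 2$. Then $\Gamma\times\Sigma\cong\Gamma_1\times\Sigma_1\times\Delta\times\Delta$, and exchanging the two $V(\Delta)$-coordinates defines a nontrivial graph automorphism $\sigma$; since $V(\Gamma)=V(\Gamma_1)\times V(\Delta)$ and $V(\Sigma)=V(\Sigma_1)\times V(\Delta)$, the action of $\sigma$ on the $V(\Gamma)$-coordinate depends on the $V(\Sigma)$-coordinate, so $\sigma$ is not of the form $\phi\times\psi$. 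For the $R$-thin conclusion, suppose distinct $u,v\in V(\Gamma)$ satisfy $N_\Gamma(u)=N_\Gamma(v)$; then for any fixed $x\in V(\Sigma)$ the transposition of $(u,x)$ and $(v,x)$ is an automorphism of $\Gamma\times\Sigma$, but it moves $(u,x)$ while fixing $(u,x')$ for every $x'\neq x$, so its action on the first coordinate depends on the second, ruling out a product factorization.

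Now assume $\Aut(\Gamma)$ and $\Aut(\Sigma)$ are both nontrivial. If $\Gamma$ is disconnected, write $\Gamma=\Gamma_1\sqcup\Gamma'$ with $\Gamma_1$ a single connected component and $\Gamma'$ the union of the others, and pick $\beta\neq\mathrm{id}$ in $\Aut(\Sigma)$. Since $\Gamma\times\Sigma=(\Gamma_1\times\Sigma)\sqcup(\Gamma'\times\Sigma)$ with no edges between the pieces, the map acting as $\mathrm{id}\times\beta$ on the first piece and as the identity on the second is an automorphism of $\Gamma\times\Sigma$ whose $V(\Sigma)$-action depends on the $V(\Gamma)$-coordinate, so is not of product form; a symmetric argument handles disconnected $\Sigma$.

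Finally, suppose both $\Gamma$ and $\Sigma$ are connected and bipartite. By Lemma~\ref{cvn}, $\Gamma\times\Sigma$ has exactly two components $X_1\cong X_2$, so $\Aut(\Gamma\times\Sigma)\cong\Aut(X_1)\wr S_2$ has order $2|\Aut(X_1)|^2$. The subgroup $M\leq\Aut(\Gamma)\times\Aut(\Sigma)$ consisting of pairs $(\alpha,\beta)$ with $\alpha,\beta$ either both preserving or both swapping the bipartitions of their respective graphs has index at most $2$; restriction to $X_1$ embeds $M$ into $\Aut(X_1)$ (injectivity follows because $R$-thinness combined with the nontriviality of the automorphism groups forces $\Gamma$ and $\Sigma$ to have edges, making each bipartition class nonempty so that the coordinate projections of $X_1$ cover $V(\Gamma)$ and $V(\Sigma)$). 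Hence $|\Aut(\Gamma\times\Sigma)|\geq 2(|\Aut(\Gamma)||\Aut(\Sigma)|/2)^2$, which strictly exceeds $|\Aut(\Gamma)||\Aut(\Sigma)|$ under the nontriviality hypothesis. The main obstacle is this bipartite step, and in particular the parity bookkeeping that underlies the index and injectivity claims for $M$; the other three parts reduce to writing down the correct extra automorphism.
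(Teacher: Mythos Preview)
The paper does not give its own proof of this theorem; it is quoted from \cite[Theorem~1.3]{QXZZ2021}, so there is no in-paper argument to compare against. I therefore comment only on the correctness of your proposal.

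Your coprime, $R$-thin, and disconnected steps are fine (modulo the harmless edge case $|V(\Sigma)|=1$ in the $R$-thin argument). The genuine gap is in the bipartite step: you assert that the two components $X_1,X_2$ of $\Gamma\times\Sigma$ are isomorphic and then use $\Aut(\Gamma\times\Sigma)\cong\Aut(X_1)\wr S_2$. Lemma~\ref{cvn}(c) only gives two components, not their isomorphism, and in fact they need not be isomorphic even when $\Gamma,\Sigma$ are connected, $R$-thin, coprime, bipartite with nontrivial automorphism groups. Take $\Gamma=\Sigma=P_5$ with bipartition $A=\{1,3,5\}$, $B=\{2,4\}$: then $|V(X_1)|=|A|^2+|B|^2=13$ while $|V(X_2)|=2|A||B|=12$, so $X_1\not\cong X_2$ and the wreath product description fails. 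The fix is short: if some $\alpha\in\Aut(\Gamma)$ or $\beta\in\Aut(\Sigma)$ swaps bipartition classes then $(\alpha,\mathrm{id})$ or $(\mathrm{id},\beta)$ already gives $X_1\cong X_2$ and your count goes through; otherwise every element of $\Aut(\Gamma)\times\Aut(\Sigma)$ preserves each $X_i$, so $M=\Aut(\Gamma)\times\Aut(\Sigma)$ embeds (by your own injectivity argument) into \emph{both} $\Aut(X_1)$ and $\Aut(X_2)$, whence
\[
|\Aut(\Gamma\times\Sigma)|\ \ge\ |\Aut(X_1)|\,|\Aut(X_2)|\ \ge\ |M|^2\ >\ |M|
\]
since $|M|\ge4$. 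Either way instability follows.
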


In studying the stability of graph pairs it is rather natural to investigate the case when the graphs involved have nontrivial automorphism groups. Under this assumption Theorem~\ref{TriviallyStable} implies that we can focus on those pairs $(\Gamma,\Sigma)$ such that $\Gamma$ and $\Sigma$ are connected coprime $R$-thin graphs and at least one of them is non-bipartite. This motivated the following definition which was introduced in \cite[Definition 1.4]{QXZZ2021}.

\begin{definition}
\label{def-nontrivial-stable}
An unstable graph pair $(\Gamma,\Sigma)$ is called \emph{nontrivially unstable} if $\Gamma$ and $\Sigma$ are connected coprime $R$-thin graphs and at least one of them is non-bipartite, and \emph{trivially unstable} otherwise. 
\end{definition}

This definition is consistent with its counterpart for graphs: A graph $\Gamma$ is nontrivially unstable or trivially unstable in terms of \cite{Wilson2008} if and only if $(\Gamma,K_2)$ is nontrivially unstable or trivially unstable, respectively.




In general, it is not easy to test by definition whether two graphs are coprime. However, there are some sufficient conditions for two graphs to be coprime that are easy to check. For example, any two graphs of coprime orders must be coprime, and any two regular graphs of coprime valencies with at least one non-bipartite must be coprime. 
Thus, it is natural to study the following problem.

\begin{problem}\label{prob}
Determine the nontrivial instability of a graph pair $(\Gamma,\Sigma)$ such that $\Gamma$ and $\Sigma$ are regular graphs of coprime valencies.
\end{problem}

The study of this problem in its general form was initiated by the authors in \cite{QXZZ2021}. Among other things we gave in \cite[Theorem 1.8]{QXZZ2021} a characterization of nontrivial unstable graph pairs $(\Gamma, \Sigma)$ in the case when both $\Gamma$ and $\Sigma$ are connected, $R$-thin and regular with coprime valencies and $\Sigma$ is vertex-transitive (see Lemma~\ref{Coprime} in the next section). The well-studied problem of determining the nontrivial instability of a regular graph is a special case of Problem~\ref{prob}, because $\Gamma$ and $K_2$ are of coprime valencies for any regular graph $\Gamma$.  

In this paper we continue our study of Problem~\ref{prob} with a focus on establishing connections between the nontrivial instability of a graph pair $(\Gamma,\Sigma)$ and that of the graph $\Gamma$. Our main result stated below achieves this goal for all vertex-transitive graphs $\Sigma$.

\begin{theorem}\label{thm1}
Let $\Gamma$ and $\Sigma$ be regular graphs of coprime valencies with $\Sigma$ vertex-transitive.  
\begin{enumerate}[{\rm (a)}]
\item Suppose that $\Sigma$ is connected, $R$-thin and bipartite. Then $(\Gamma,\Sigma)$ is nontrivially unstable if and only if $\Gamma$ is nontrivially unstable. 
\item Suppose that $\Sigma$ is disconnected, or $R$-thick, or non-bipartite. Then $(\Gamma,\Sigma)$ cannot be nontrivially unstable.
\end{enumerate}
\end{theorem}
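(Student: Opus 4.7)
The overall plan is to exploit Lemma~\ref{Coprime} from~\cite{QXZZ2021}, which provides an explicit combinatorial characterisation of nontrivial instability for any pair $(\Gamma,\Sigma)$ satisfying the standing hypotheses of Theorem~\ref{thm1} (both regular with coprime valencies, $\Sigma$ vertex-transitive, and both $\Gamma$ and $\Sigma$ connected and $R$-thin). The theorem then follows by matching this characterisation against the defining condition for $\Gamma$ itself (equivalently, for the pair $(\Gamma,K_2)$) to be nontrivially unstable.

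I would first dispatch part~(b) by ruling out each of the three listed possibilities for $\Sigma$. If $\Sigma$ is disconnected or $R$-thick, then Theorem~\ref{TriviallyStable} forbids nontrivial instability outright, since the latter demands $\Sigma$ be both connected and $R$-thin. If $\Sigma$ is non-bipartite (and the other two obstructions absent), suppose for contradiction that $(\Gamma,\Sigma)$ is nontrivially unstable. Then $\Gamma$ is connected, $R$-thin, and coprime to $\Sigma$. When $\Gamma$ is also non-bipartite, Lemma~\ref{cvn} ensures that $\Gamma\times\Sigma$ is connected, non-bipartite and $R$-thin, so Theorem~\ref{classic} yields stability, a contradiction. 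When $\Gamma$ is bipartite, the graph $\Gamma\times\Sigma$ splits into two isomorphic components; using the bipartition of $\Gamma$ together with the vertex-transitivity of $\Sigma$ one realises the component swap inside $\Aut(\Gamma)\times\Aut(\Sigma)$, then applies Theorem~\ref{classic} (or Lemma~\ref{Coprime}) to a single component to again obtain stability.

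For part~(a), I would prove the equivalence in both directions via Lemma~\ref{Coprime}. For the ``only if'' direction, the hypotheses force $\Gamma$ to be connected, $R$-thin, and, because $\Sigma$ is bipartite, non-bipartite. The conclusion of Lemma~\ref{Coprime} then exhibits on $\Gamma$ a distinguished structure (a colouring or partition coming from the unexpected automorphism of $\Gamma\times\Sigma$) that encodes an automorphism of the canonical double cover $\Gamma\times K_2$ outside $\Aut(\Gamma)\times\Aut(K_2)$, verifying nontrivial instability of $\Gamma$. For the ``if'' direction, starting from an unexpected involutory automorphism of $\Gamma\times K_2$, I would lift it to $\Gamma\times\Sigma$ by pulling back along the natural quotient $\Sigma\to K_2$ induced by the bipartition $V(\Sigma)=X\cup Y$. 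Concretely, one defines a map on $V(\Gamma)\times V(\Sigma)$ whose first-coordinate action agrees with the given unexpected automorphism on the appropriate fibre, and whose second-coordinate action is a bipartition-preserving automorphism of $\Sigma$ supplied by vertex-transitivity; the coprime-valency hypothesis ensures that the lift does not accidentally lie in $\Aut(\Gamma)\times\Aut(\Sigma)$.

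The main obstacle will be the lifting step in the ``if'' direction of part~(a). A generic unexpected automorphism of $\Gamma\times K_2$ need not respect the finer structure of $\Sigma$, so one must use Lemma~\ref{Coprime} to repackage the data in the right form and invoke vertex-transitivity of $\Sigma$ at the right moment to produce a well-defined automorphism of $\Gamma\times\Sigma$. Keeping track of what being ``unexpected'' means at each stage, and verifying that the lifted map indeed falls outside $\Aut(\Gamma)\times\Aut(\Sigma)$, is where the bulk of the technical work is expected to lie.
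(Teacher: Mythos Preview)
Your outline for part~(a) is essentially the paper's argument, though stated more vaguely than necessary: the ``distinguished structure'' in the only-if direction is simply a nontrivial two-fold automorphism $(\alpha_i,\alpha_j)$ read off from any edge $\{i,j\}\in E(\Sigma)$ with $\alpha_i\neq\alpha_j$ in the non-diagonal $\Sigma$-automorphism furnished by Lemma~\ref{Coprime}; and the ``lift'' in the if direction is just assigning $\alpha$ to one bipartition class of $\Sigma$ and $\beta$ to the other, yielding a non-diagonal $\Sigma$-automorphism and hence nontrivial instability again by Lemma~\ref{Coprime}. There is no need to invoke an involution, nor a second-coordinate automorphism of $\Sigma$, nor to worry separately about the lift being ``unexpected'': non-diagonality is immediate because $\alpha\neq\beta$ and both parts of the bipartition are nonempty.

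The genuine gap is in part~(b), in the subcase where $\Sigma$ is connected, $R$-thin and non-bipartite while $\Gamma$ is bipartite. Your claim that $\Gamma\times\Sigma$ then splits into two components is false: by Lemma~\ref{cvn}(c), the product is connected as soon as at least one factor is non-bipartite. Moreover $\Gamma\times\Sigma$ is bipartite here (inherited from $\Gamma$), so Theorem~\ref{classic} does not apply, and there is no component swap to realise. This subcase is in fact where the bulk of the work lies. The paper proceeds via Lemma~\ref{Coprime}: take a non-diagonal $\Sigma$-automorphism $(\alpha_1,\dots,\alpha_m)$, use vertex-transitivity and non-bipartiteness of $\Sigma$ to ensure that every vertex lies on an odd cycle, and show that along any odd cycle the $\alpha_i$ must all coincide (via Lemma~\ref{R-thick}(a)--(c) and the $R$-thinness of $\Gamma$). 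This forces each $\alpha_i$ restricted to the vertex set of its odd cycle to be an actual automorphism of $\Gamma$; connectivity of $\Sigma$ together with non-diagonality then produces an edge between two vertices carrying distinct permutations, giving a nontrivial two-fold automorphism one of whose entries lies in $\Aut(\Gamma)$, and Lemma~\ref{R-thick}(d) contradicts the $R$-thinness of $\Gamma$. You will need to supply an argument of this kind; the approach you sketched does not go through.
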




This result reduces the quest for nontrivial instability of $(\Gamma,\Sigma)$ to that for nontrivial instability of $\Gamma$ in the case when $\Gamma$ and $\Sigma$ are regular of coprime valencies and $\Sigma$ is vertex-transitive. This enables us to obtain many nontrivially unstable graph pairs from known nontrivially unstable graphs. For example, in \cite{Wilson2008} Wilson gave four sufficient conditions for a circulant graph to be nontrivially unstable (see \cite{QXZ2019} for an amendment to one of these conditions). By Theorem \ref{thm1}, any such nontrivially unstable circulant $\Gamma$ and any connected, vertex-transitive, $R$-thin and bipartite graph $\Sigma$ with valency coprime to the valency of $\Gamma$ give rise to a nontrivially unstable graph pair $(\Gamma,\Sigma)$. 

The rest of this paper is organized as follows. In the next section we will set up notation and recall a few known results on stability of graphs and stability of graph pairs. The proof of Theorem~\ref{thm1} will be given in Section~\ref{sec-proof1}. As applications of Theorem~\ref{thm1}, we will discuss the stability of $(\Gamma,K_m)$ and $(\Gamma,C_m)$ in Section~\ref{sec-application}.  

\section{Preliminaries}

For a graph $\Gamma$ and a vertex $u$ of $\Gamma$, the \emph{neighborhood} of $u$ in $\Gamma$, denoted by $N_\Gamma(u)$, is the set of vertices adjacent to $u$ in $\Gamma$. For two adjacent vertices $u, v$ in a graph, the edge between them is denoted by the unordered pair $\{u, v\}$. 





\subsection{Two-fold automorphisms and stability of graphs}

The following definition was first introduced by Zelinka in~\cite{Zelinka1971,Zelinka1972} for digraphs in his study of isotopies of digraphs and was extended to mixed graphs by Lauri, Mizzi and Scapellato in~\cite{LMS2015}. 

\begin{definition}\label{def:two-fold-auto}
Let $\Gamma$ be a graph. A pair of permutations $(\alpha,\beta)$ of $V(\Gamma)$ is called a \emph{two-fold automorphism} of $\Gamma$ if for all $u,v\in V(\Gamma)$, $\{u,v\} \in E(\Gamma)$ if and only if $\{u^\alpha, v^\beta\} \in E(\Gamma)$. A two-fold automorphism $(\alpha,\beta)$ is said to be \emph{nontrivial} if $\alpha\neq\beta$. 
\end{definition}

The following lemma is from~\cite[Proposition~4.2]{MSZ1989} (see also~\cite[Theorem~3.2]{LMS2015}). 
\begin{lemma}
\label{TF}
A graph is unstable if and only if it has a nontrivial two-fold automorphism. 
\end{lemma}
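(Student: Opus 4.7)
The plan is to establish a correspondence between two-fold automorphisms of $\Gamma$ and the automorphisms of $\Gamma \times K_2$ that preserve each of the two \emph{layers} $V(\Gamma) \times \{0\}$ and $V(\Gamma) \times \{1\}$, where $V(K_2) = \{0, 1\}$. For any pair of permutations $(\alpha, \beta)$ of $V(\Gamma)$, define the permutation $\phi_{\alpha, \beta}$ of $V(\Gamma \times K_2)$ by $\phi_{\alpha, \beta}(u, 0) = (u^\alpha, 0)$ and $\phi_{\alpha, \beta}(u, 1) = (u^\beta, 1)$. Because the edges of $\Gamma \times K_2$ are exactly the pairs $\{(u, 0), (v, 1)\}$ with $\{u, v\} \in E(\Gamma)$, a direct check shows that $\phi_{\alpha, \beta} \in \Aut(\Gamma \times K_2)$ if and only if $(\alpha, \beta)$ satisfies the condition of Definition~\ref{def:two-fold-auto}, and that $\phi_{\alpha, \beta}$ lies in the canonical copy of $\Aut(\Gamma) \times \Aut(K_2)$ inside $\Aut(\Gamma \times K_2)$ if and only if $\alpha = \beta \in \Aut(\Gamma)$.

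The ``if'' direction is then immediate: from a nontrivial two-fold automorphism $(\alpha, \beta)$ with $\alpha \neq \beta$, the correspondence yields $\phi_{\alpha, \beta} \in \Aut(\Gamma \times K_2) \setminus (\Aut(\Gamma) \times \Aut(K_2))$, so $(\Gamma, K_2)$, and hence $\Gamma$, is unstable. For the ``only if'' direction, start from some $\phi \in \Aut(\Gamma \times K_2) \setminus (\Aut(\Gamma) \times \Aut(K_2))$. The plan is to modify $\phi$ by an element of the canonical subgroup (which does not affect membership in that subgroup) so that the result preserves each layer; the correspondence above then produces a two-fold automorphism $(\alpha, \beta)$, and the fact that the modified automorphism still lies outside the canonical subgroup forces $\alpha \neq \beta$. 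Since $\Gamma \times K_2$ is bipartite with the two layers as its bipartition classes, the natural attempt is: either $\phi$ already preserves the layers, or it swaps them, in which case composing with the canonical layer-swap $\tau : (u, i) \mapsto (u, 1 - i) \in \Aut(\Gamma) \times \Aut(K_2)$ yields a layer-preserving automorphism still outside the canonical subgroup.

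The main obstacle is justifying this reduction when $\Gamma \times K_2$ is disconnected, which occurs precisely when $\Gamma$ is bipartite or disconnected. In that situation an automorphism of $\Gamma \times K_2$ may permute its connected components in a way that neither globally preserves nor globally swaps the layers, so the simple dichotomy above fails and a finer analysis of how each connected component of $\Gamma \times K_2$ meets the two layers is required. The details of this case analysis are worked out in~\cite{MSZ1989} (see also~\cite{LMS2015}).
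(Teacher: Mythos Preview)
The paper does not give its own proof of this lemma; it simply cites \cite[Proposition~4.2]{MSZ1989} and \cite[Theorem~3.2]{LMS2015}. Your proposal supplies strictly more than that: a correct description of the correspondence between two-fold automorphisms and layer-preserving automorphisms of $\Gamma\times K_2$, a complete argument for the ``if'' direction, and a correct reduction for the ``only if'' direction when $\Gamma\times K_2$ is connected, before deferring to the very same two references for the disconnected case. So your approach is entirely consistent with the paper's treatment and in fact goes further; there is nothing to correct.
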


The next result is essentially known in~\cite{LMS2015} and~\cite{MSZ1989}. We give its proof for the completeness of the paper. 

\begin{lemma}\label{R-thick}
Let $(\alpha,\beta)$ be a two-fold automorphism of a graph $\Gamma$. Then the following statements hold: 
\begin{enumerate}[{\rm (a)}]
\item $(\beta,\alpha)$ and $(\alpha^{-1},\beta^{-1})$ are two-fold automorphisms of $\Gamma$. 
\item If $(\gamma,\delta)$ is a two-fold automorphism of $\Gamma$, then so is $(\alpha\gamma,\beta\delta)$.
\item If $(\alpha,\beta)$ is nontrivial with $\alpha=1$ or $\beta=1$, then $\Gamma$ is $R$-thick. 
\item If $(\alpha,\beta)$ is nontrivial with $\alpha\in\Aut(\Gamma)$ or $\beta\in\Aut(\Gamma)$, then $\Gamma$ is $R$-thick. 
\end{enumerate}
\end{lemma}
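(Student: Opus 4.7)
The plan is to dispatch parts (a)--(d) in that order, using each to feed the next, so the whole lemma becomes a short sequence of manipulations of the defining biconditional for a two-fold automorphism.

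For part (a), swapping $u\leftrightarrow v$ in the biconditional and using that $\{u,v\}$ is unordered shows that $(\beta,\alpha)$ is a two-fold automorphism, while substituting $u\mapsto u^{\alpha^{-1}}$ and $v\mapsto v^{\beta^{-1}}$ into the same biconditional shows that $(\alpha^{-1},\beta^{-1})$ is a two-fold automorphism. Part (b) is a one-line chain: applying the biconditional for $(\alpha,\beta)$ to $(u,v)$ and then the one for $(\gamma,\delta)$ to the resulting pair $(u^\alpha,v^\beta)$ gives
\[
\{u,v\}\in E(\Gamma)\iff\{u^\alpha,v^\beta\}\in E(\Gamma)\iff\{(u^\alpha)^\gamma,(v^\beta)^\delta\}\in E(\Gamma),
\]
and the right-hand side equals $\{u^{\alpha\gamma},v^{\beta\delta}\}$ under the standard right-action convention $u^{\alpha\gamma}=(u^\alpha)^\gamma$.

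Part (c) is the heart of the lemma. Using (a) to swap coordinates if necessary, assume $\beta=1$ and $\alpha\neq 1$, and pick $u\in V(\Gamma)$ with $u^\alpha\neq u$. The biconditional specialized to $\beta=1$ reads $v\in N_\Gamma(u)\iff v\in N_\Gamma(u^\alpha)$ for every $v\in V(\Gamma)$, so $N_\Gamma(u)=N_\Gamma(u^\alpha)$ with $u\neq u^\alpha$, witnessing that $\Gamma$ is $R$-thick.

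Part (d) reduces to (c) via (a) and (b). If $\beta\in\Aut(\Gamma)$, then $(\beta,\beta)$ is a two-fold automorphism; by (a), so is $(\beta^{-1},\beta^{-1})$; and by (b), composing it with $(\alpha,\beta)$ produces the two-fold automorphism $(\alpha\beta^{-1},\,1)$. Nontriviality of $(\alpha,\beta)$ forces $\alpha\beta^{-1}\neq 1$, so (c) finishes the job. The case $\alpha\in\Aut(\Gamma)$ is analogous, using $(\alpha^{-1},\alpha^{-1})$ in place of $(\beta^{-1},\beta^{-1})$ to arrive at the two-fold automorphism $(1,\beta\alpha^{-1})$ with $\beta\alpha^{-1}\neq 1$. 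I expect no genuine obstacle: the whole lemma is a careful unpacking of Definition~\ref{def:two-fold-auto}. The only point warranting attention is fixing the composition convention in (b) so that coordinate-wise composition of two-fold automorphisms stays within the class; once this is pinned down, (c) and (d) follow by pure book-keeping.
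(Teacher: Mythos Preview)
Your proposal is correct and follows essentially the same route as the paper: parts (a) and (b) are immediate from the definition, part (c) reads off two vertices with equal neighborhoods from a nontrivial two-fold automorphism with one coordinate the identity, and part (d) reduces to (c) by composing with $(\alpha^{-1},\alpha^{-1})$ or $(\beta^{-1},\beta^{-1})$ via (a) and (b). The only cosmetic difference is that the paper treats the case $\alpha=1$ (resp.\ $\alpha\in\Aut(\Gamma)$) first in (c) and (d), whereas you start with the other coordinate.
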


\begin{proof}
Parts~(a) and~(b) hold clearly by the definition of two-fold automorphisms. 

Suppose that $(\alpha,\beta)$ is nontrivial with $\alpha=1$, that is, $(1,\beta)$ is a nontrivial two-fold automorphism of $\Gamma$. Then $\beta\neq1$, and for $u,v\in V(\Gamma)$ we have 
\[
u\in N_{\Gamma}(v)\Leftrightarrow \{u,v\}\in E(\Gamma)\Leftrightarrow\{u,v\}^{(1,\beta)}=\{u,v^\beta\}\in E(\Gamma) \Leftrightarrow u\in N_{\Gamma}(v^\beta). 
\]
This implies that vertices $v$ and $v^\beta$ have the same neighborhood for each $v\in V(\Gamma)$, and so $\Gamma$ is $R$-thick as $\beta\neq1$. Similarly, the condition that $(\alpha,\beta)$ is nontrivial with $\beta=1$ also implies that $\Gamma$ is $R$-thick. This completes the proof of part~(c). 

Next suppose that $(\alpha,\beta)$ is nontrivial with $\alpha\in\Aut(\Gamma)$. Then $(\alpha,\alpha)$ is a two-fold automorphism of $\Gamma$, and so is $(1,\beta\alpha^{-1})$ by parts~(a) and~(b).
Moreover, $\beta\alpha^{-1}\neq1$ as $\alpha\neq\beta$. Hence $(1,\beta\alpha^{-1})$ is a nontrivial two-fold automorphism of $\Gamma$. This implies that $\Gamma$ is $R$-thick by part~(c). Similarly, the condition that $(\alpha,\beta)$ is nontrivial with $\beta\in\Aut(\Gamma)$ also implies the $R$-thickness of $\Gamma$, which completes the proof of part~(d).  
\end{proof}



\subsection{$\Sigma$-automorphisms and stability of graph pairs}

The next definition, first given in \cite[Definition~1.7]{QXZZ2021}, is a generalization of two-fold automorphisms.

\begin{definition}\label{def:sigma-auto}
Let $\Gamma$ and $\Sigma$ be graphs with $V(\Sigma)=\{1,\dots,m\}$, and let $\alpha_1,\dots,\alpha_m$ be permutations of $V(\Gamma)$. We say that the $m$-tuple $(\alpha_1,\dots,\alpha_m)$ is a \emph{$\Sigma$-automorphism} of $\Gamma$ if for all $u,v\in V(\Gamma)$ and $\{i, j\} \in E(\Sigma)$, $\{u, v\} \in E(\Gamma)$ if and only if $\{u^{\alpha_i}, v^{\alpha_j}\} \in E(\Gamma)$. Such a $\Sigma$-automorphism $(\alpha_1,\dots,\alpha_m)$ of $\Gamma$ is said to be \emph{non-diagonal} if there exists at least one pair of vertices $i,j\in V(\Sigma)$ such that $\alpha_i\neq\alpha_j$. 
\end{definition}

The following lemma is from~\cite[Lemma~2.6(a)]{QXZZ2021}. 
\begin{lemma}
\label{nontrivial}
Let $\Gamma$ and $\Sigma$ be graphs. If at least one $\Sigma$-automorphism of $\Gamma$ is non-diagonal, then $(\Gamma,\Sigma)$ is unstable.
\end{lemma}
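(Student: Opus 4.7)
The plan is to exhibit an explicit automorphism of $\Gamma\times\Sigma$ that does not come from the canonical embedding of $\Aut(\Gamma)\times\Aut(\Sigma)$. Let $(\alpha_1,\dots,\alpha_m)$ be a non-diagonal $\Sigma$-automorphism of $\Gamma$, so there exist $i_0,j_0\in V(\Sigma)$ with $\alpha_{i_0}\neq\alpha_{j_0}$. Define a map $\tau:V(\Gamma)\times V(\Sigma)\to V(\Gamma)\times V(\Sigma)$ by
\[
\tau(u,i)=(u^{\alpha_i},i),\qquad u\in V(\Gamma),\ i\in V(\Sigma).
\]
Since each $\alpha_i$ is a permutation of $V(\Gamma)$, $\tau$ is a bijection on $V(\Gamma\times\Sigma)$.

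The first main step is to show that $\tau\in\Aut(\Gamma\times\Sigma)$. Take any pair of vertices $(u,i)$ and $(v,j)$. By the definition of the direct product, $\{(u,i),(v,j)\}\in E(\Gamma\times\Sigma)$ if and only if $\{u,v\}\in E(\Gamma)$ and $\{i,j\}\in E(\Sigma)$. Similarly, $\{\tau(u,i),\tau(v,j)\}=\{(u^{\alpha_i},i),(v^{\alpha_j},j)\}$ is an edge of $\Gamma\times\Sigma$ if and only if $\{u^{\alpha_i},v^{\alpha_j}\}\in E(\Gamma)$ and $\{i,j\}\in E(\Sigma)$. The $\Sigma$-coordinate condition is the same in both statements; the equivalence of the two $\Gamma$-edge conditions under the hypothesis $\{i,j\}\in E(\Sigma)$ is precisely what Definition~\ref{def:sigma-auto} asserts. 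Hence $\tau$ preserves adjacency in both directions and is therefore an automorphism of $\Gamma\times\Sigma$.

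The second main step is to show that $\tau$ does not lie in the canonical image of $\Aut(\Gamma)\times\Aut(\Sigma)$ inside $\Aut(\Gamma\times\Sigma)$. Recall that this image consists of the automorphisms of the form $(u,i)\mapsto(u^\phi,i^\psi)$ with $\phi\in\Aut(\Gamma)$ and $\psi\in\Aut(\Sigma)$. If $\tau$ equalled such a map, then comparing second coordinates would force $i^\psi=i$ for every $i\in V(\Sigma)$, hence $\psi=1$; then comparing first coordinates would yield $u^{\alpha_i}=u^\phi$ for every $u\in V(\Gamma)$ and every $i\in V(\Sigma)$, forcing $\alpha_i=\phi$ for all $i$. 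This contradicts $\alpha_{i_0}\neq\alpha_{j_0}$.

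Consequently, $\Aut(\Gamma\times\Sigma)$ strictly contains the canonical copy of $\Aut(\Gamma)\times\Aut(\Sigma)$, so the two groups cannot be isomorphic (by a comparison of orders, since the embedding is injective), and $(\Gamma,\Sigma)$ is unstable. The only step that requires any genuine content is the verification that $\tau$ is an automorphism, and even that step is entirely driven by the \emph{if and only if} clause in Definition~\ref{def:sigma-auto}; the remaining steps are bookkeeping. The main subtlety to watch out for is that the $\Sigma$-automorphism condition only constrains adjacencies \emph{across edges of $\Sigma$}, which is exactly the range of pairs $(i,j)$ that appear as second coordinates of edges of $\Gamma\times\Sigma$, so the argument aligns perfectly.
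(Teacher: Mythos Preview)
Your proof is correct. The paper does not actually supply its own proof of this lemma; it merely cites \cite[Lemma~2.6(a)]{QXZZ2021}, and your argument---constructing the fibrewise map $\tau(u,i)=(u^{\alpha_i},i)$, checking it is an automorphism of $\Gamma\times\Sigma$ via Definition~\ref{def:sigma-auto}, and showing it lies outside the canonical copy of $\Aut(\Gamma)\times\Aut(\Sigma)$---is exactly the standard approach one would expect that reference to take.
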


The next lemma, which was proved in \cite[Theorem~1.8]{QXZZ2021}, characterizes nontrivially unstable graph pairs $(\Gamma,\Sigma)$ in the case when $\Gamma$ and $\Sigma$ are regular with coprime valencies and nontrivial automorphism groups and $\Sigma$ is vertex-transitive. 

\begin{lemma}
\label{Coprime}
Let $\Gamma$ and $\Sigma$ be regular graphs of coprime valencies with $\Sigma$ vertex-transitive. Suppose that both $\Gamma$ and $\Sigma$ are connected and $R$-thin and that at least one of them is non-bipartite. Then $(\Gamma,\Sigma)$ is nontrvially unstable if and only if at least one $\Sigma$-automorphism of $\Gamma$ is non-diagonal.
\end{lemma}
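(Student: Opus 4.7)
My plan is to prove the two directions separately, with the reverse implication being the substantive one.

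For the \emph{if} direction, suppose $(\alpha_1,\dots,\alpha_m)$ is a non-diagonal $\Sigma$-automorphism of $\Gamma$. Then Lemma~\ref{nontrivial} gives that $(\Gamma,\Sigma)$ is unstable. To upgrade this to \emph{nontrivial} instability in the sense of Definition~\ref{def-nontrivial-stable}, I would verify the required structural conditions: $\Gamma$ and $\Sigma$ are connected and $R$-thin by hypothesis, at least one of them is non-bipartite by hypothesis, and they are coprime as graphs because two regular graphs of coprime valencies with at least one non-bipartite are automatically coprime (as noted in the introduction before Problem~\ref{prob}).

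For the \emph{only if} direction, assume $(\Gamma,\Sigma)$ is nontrivially unstable, so there exists some $\varphi \in \Aut(\Gamma\times\Sigma) \setminus (\Aut(\Gamma)\times\Aut(\Sigma))$. Writing $F_i := V(\Gamma)\times\{i\}$ for $i \in V(\Sigma)$, the plan is to extract a non-diagonal $\Sigma$-automorphism from $\varphi$. The heart of the argument is to show that $\varphi$ permutes $\{F_i : i \in V(\Sigma)\}$ as a block system, inducing a permutation $\sigma$ of $V(\Sigma)$ with $\varphi(F_i) = F_{\sigma(i)}$ for every $i$. Granted this, one can write $\varphi(u,i) = (u^{\alpha_i},\sigma(i))$ for suitable permutations $\alpha_i$ of $V(\Gamma)$, after which the remainder is a three-step verification. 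First, $\sigma \in \Aut(\Sigma)$, because $ij \in E(\Sigma)$ is equivalent to the existence of at least one edge between $F_i$ and $F_j$ in $\Gamma\times\Sigma$, a property preserved by $\varphi$. Second, $(\alpha_1,\dots,\alpha_m)$ is a $\Sigma$-automorphism of $\Gamma$, obtained by translating adjacency preservation of $\varphi$ through the formula $\varphi(u,i) = (u^{\alpha_i}, \sigma(i))$ and invoking $\sigma \in \Aut(\Sigma)$. Third, the tuple is non-diagonal, since otherwise all $\alpha_i$ would coincide with some $\alpha$ and $\varphi$ would be the image of $(\alpha,\sigma)\in\Aut(\Gamma)\times\Aut(\Sigma)$, contrary to our choice of $\varphi$.

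The main obstacle is the fibration-preservation step. My approach would exploit the coprimality $\gcd(d_\Gamma, d_\Sigma)=1$, where $d_\Gamma$ and $d_\Sigma$ denote the valencies of $\Gamma$ and $\Sigma$, respectively. A natural strategy is to distinguish fibers by arithmetic invariants: two distinct vertices $(u,i), (u',i)$ in a common fiber share $|N_\Gamma(u)\cap N_\Gamma(u')|\cdot d_\Sigma$ common neighbors in $\Gamma\times\Sigma$, a multiple of $d_\Sigma$, while two distinct vertices $(u,i),(u,i')$ in a common ``column'' share $d_\Gamma\cdot|N_\Sigma(i)\cap N_\Sigma(i')|$ common neighbors, a multiple of $d_\Gamma$. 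The $R$-thin hypothesis prevents distinct vertices of either factor from having identical neighborhoods, and vertex-transitivity of $\Sigma$ supplies the uniformity needed to work cleanly with these counts. Combining these ingredients, I would try to show that any automorphism of $\Gamma\times\Sigma$ must respect the fiber decomposition. A complication is that $\Gamma\times\Sigma$ may be bipartite under our hypotheses, since only one of $\Gamma,\Sigma$ need be non-bipartite; hence Theorem~\ref{classic} cannot be invoked directly, and handling the bipartite case (e.g., by passing to connected components, or by invoking a suitable refinement of the canonical Cartesian-type equivalence on vertices of the product) is where I expect the bulk of the technical work to lie.
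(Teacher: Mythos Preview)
The paper does not prove Lemma~\ref{Coprime}: it is quoted verbatim from \cite[Theorem~1.8]{QXZZ2021} and used as a black box, so there is no in-paper argument to compare your proposal against.

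As to the proposal itself, your \emph{if} direction is complete and correct. Your architecture for the \emph{only if} direction---show that every automorphism of $\Gamma\times\Sigma$ permutes the fibers $F_i=V(\Gamma)\times\{i\}$, then read off $\sigma\in\Aut(\Sigma)$ and the tuple $(\alpha_1,\dots,\alpha_m)$---is the right one, and your three verification steps after fiber preservation are all fine (for the last step note that if all $\alpha_i$ equal some $\alpha$, then picking any edge $\{i,j\}$ of $\Sigma$ shows $(\alpha,\alpha)$ is a two-fold automorphism of $\Gamma$, whence $\alpha\in\Aut(\Gamma)$).

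The gap is exactly where you locate it: the fiber-preservation step. Your proposed common-neighbor invariant does not by itself separate fibers. Two arbitrary vertices $(u,i)$ and $(u',i')$ of $\Gamma\times\Sigma$ have
\[
|N_\Gamma(u)\cap N_\Gamma(u')|\cdot|N_\Sigma(i)\cap N_\Sigma(i')|
\]
common neighbors, and for generic $u\neq u'$, $i\neq i'$ this product need not be a multiple of $d_\Gamma$ or of $d_\Sigma$, nor need it differ from the value for a same-fiber pair. So the arithmetic dichotomy you sketch does not obviously yield an $\Aut(\Gamma\times\Sigma)$-invariant characterization of ``lying in the same fiber''. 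The argument in \cite{QXZZ2021} uses more than common neighbors: one needs walk counts of varying lengths together with the hypothesis that at least one factor is non-bipartite (so that walk-count matrices in that factor eventually have full support), and the coprimality of valencies enters to separate the two factorizations of the walk-count product. Vertex-transitivity of $\Sigma$ is also used, not merely for cosmetic uniformity. Your outline is a reasonable starting point, but the invariant you propose would need to be strengthened along these lines before the argument closes.
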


\section{Proof of Theorem~\ref{thm1}}\label{sec-proof1}

\begin{lemma}\label{thm0}
Let $\Gamma$ and $\Sigma$ be regular graphs of coprime valencies with $\Sigma$ vertex-transitive. If $(\Gamma, \Sigma)$ is nontrivially unstable, then $\Gamma$ is unstable. 
\end{lemma}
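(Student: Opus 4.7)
The plan is to bridge the gap between the $\Sigma$-automorphism characterization (Lemma~\ref{Coprime}) and the two-fold automorphism characterization (Lemma~\ref{TF}), exploiting the connectedness of $\Sigma$ that is built into the definition of nontrivial instability.

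First I would unpack the hypotheses. Since $(\Gamma,\Sigma)$ is nontrivially unstable, by Definition~\ref{def-nontrivial-stable} both $\Gamma$ and $\Sigma$ are connected, coprime and $R$-thin, and at least one is non-bipartite. Combined with the hypothesis that $\Gamma,\Sigma$ are regular of coprime valencies with $\Sigma$ vertex-transitive, all assumptions of Lemma~\ref{Coprime} are met. Applying that lemma, I obtain permutations $\alpha_1,\dots,\alpha_m$ of $V(\Gamma)$ forming a non-diagonal $\Sigma$-automorphism of $\Gamma$, where $V(\Sigma)=\{1,\dots,m\}$. Non-diagonality supplies some pair of indices $i_0,j_0\in V(\Sigma)$ with $\alpha_{i_0}\neq\alpha_{j_0}$.

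The key step is to upgrade this inequality from an arbitrary pair of indices to an \emph{edge} of $\Sigma$. I argue by contradiction: suppose $\alpha_i=\alpha_j$ for every $\{i,j\}\in E(\Sigma)$. Since $\Sigma$ is connected, any two vertices $i_0,j_0\in V(\Sigma)$ are joined by a path $i_0=k_0,k_1,\dots,k_t=j_0$ in $\Sigma$; walking along this path gives $\alpha_{i_0}=\alpha_{k_1}=\cdots=\alpha_{j_0}$, contradicting the choice of $i_0,j_0$. Hence there exists an edge $\{i,j\}\in E(\Sigma)$ with $\alpha_i\neq\alpha_j$.

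By the definition of $\Sigma$-automorphism (Definition~\ref{def:sigma-auto}) applied to this edge, we have for all $u,v\in V(\Gamma)$
\[
\{u,v\}\in E(\Gamma)\iff \{u^{\alpha_i},v^{\alpha_j}\}\in E(\Gamma),
\]
so $(\alpha_i,\alpha_j)$ is a two-fold automorphism of $\Gamma$ in the sense of Definition~\ref{def:two-fold-auto}, and it is nontrivial because $\alpha_i\neq\alpha_j$. Lemma~\ref{TF} then yields that $\Gamma$ is unstable, completing the proof.

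The only substantive step is the connectedness argument for finding an edge witnessing non-diagonality; everything else is a direct invocation of Lemmas~\ref{Coprime} and~\ref{TF}. I do not expect any obstacle beyond checking that the hypotheses of Lemma~\ref{Coprime} are indeed all implied by the assumption of nontrivial instability together with the regularity/coprime-valency/vertex-transitivity hypotheses of the current statement.
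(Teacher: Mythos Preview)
Your proposal is correct and follows essentially the same route as the paper: invoke Lemma~\ref{Coprime} to obtain a non-diagonal $\Sigma$-automorphism, use connectedness of $\Sigma$ to locate an edge $\{i,j\}$ with $\alpha_i\neq\alpha_j$, and then read off a nontrivial two-fold automorphism to apply Lemma~\ref{TF}. The paper compresses the connectedness step into a single sentence, but your path argument is exactly what underlies it.
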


\begin{proof}
Let $V(\Sigma)=\{1,\dots,m\}$. Suppose that $(\Gamma, \Sigma)$ is nontrivially unstable. Then Lemma~\ref{Coprime} asserts that there exists a non-diagonal $\Sigma$-automorphism of $\Gamma$, say, $(\alpha_1, \ldots,  \alpha_m)$. Since $\Sigma$ is connected and the $\Sigma$-automorphism $(\alpha_1, \ldots,  \alpha_m)$ is non-diagonal, there exist adjacent vertices $i, j$ of $\Sigma$ such that $\alpha_i\neq\alpha_j$. Since $(\alpha_1, \ldots,  \alpha_m)$ is a $\Sigma$-automorphism of $\Gamma$, for $u,v\in V(\Gamma)$, $\{u, v\} \in E(\Gamma)$ if and only if $\{u^{\alpha_i}, v^{\alpha_j}\} \in E(\Gamma)$. Thus $(\alpha_i, \alpha_j)$ is a nontrivial two-fold automorphism of $\Gamma$. This together with Lemma~\ref{TF} implies that $\Gamma$ is unstable. 
\end{proof}



We are now in a position to prove part~(a) of Theorem~\ref{thm1}.


\begin{proof}[Proof of part \emph{(a)}]
First suppose that $(\Gamma,\Sigma)$ is nontrivially unstable. As $\Sigma$ is bipartite, we obtain from Definition \ref{def-nontrivial-stable} that $\Gamma$ and $\Sigma$ are coprime connected $R$-thin graphs with $\Gamma$ non-bipartite. Since $\Sigma$ is vertex-transitive and the valencies of $\Gamma$ and $\Sigma$ are coprime, Lemma~\ref{thm0} implies that $\Gamma$ is unstable. Hence $\Gamma$ is nontrivially unstable. 

Next suppose that $\Gamma$ is nontrivially unstable. Then $\Gamma$ is connected, $R$-thin and non-bipartite. Since $\Gamma$ is unstable, we derive from Lemma~\ref{TF} that there exists a nontrivial two-fold automorphism $(\alpha,\beta)$ of $\Gamma$, that is, $\alpha$ and $\beta$ are distinct permutations of $V(\Gamma)$ such that 
\begin{equation}\label{eq3}
\{u,v\} \in E(\Gamma)\Leftrightarrow \{u^\alpha, v^\beta\} \in E(\Gamma)
\end{equation}
for $u,v\in V(\Gamma)$. Let $V(\Sigma)=\{1,\ldots,m\}$. Since $\Sigma$ is bipartite, $V(\Sigma)$ can be partitioned into two nonempty subsets $A$ and $B$ such that every edge of $\Sigma$ joins a vertex in $A$ and a vertex in $B$. For $i\in\{1,\dots,m\}$, let $\alpha_i=\alpha$ if $i\in A$ and $\alpha_i=\beta$ if $i\in B$.
Then it follows from~\eqref{eq3} that $(\alpha_1,\dots,\alpha_m)$ is a $\Sigma$-automorphism of $\Gamma$. Since $A$ and $B$ are nonempty, the $\Sigma$-automorphism $(\alpha_1,\dots,\alpha_m)$ is non-diagonal. Hence $(\Gamma, \Sigma)$ is nontrivially unstable by Lemma~\ref{Coprime}. 
\end{proof}



To prove part~(b) of Theorem~\ref{thm1}, we need the next lemma.

\begin{lemma}\label{cvn}
Let $\Gamma$ and $\Sigma$ be graphs. 
\begin{enumerate}[{\rm (a)}]
\item $\Gamma \times \Sigma$ is non-bipartite if and only if both $\Gamma$ and $\Sigma$ are non-bipartite (\cite[Exercise 8.13]{HIK2011}). 
\item $\Gamma \times \Sigma$ is $R$-thin if and only if both $\Gamma$ and $\Sigma$ are $R$-thin (\cite[Lemma~2.3]{QXZ2019}).
\item Suppose that both $\Gamma$ and $\Sigma$ are connected with order at least $2$. Then $\Gamma \times \Sigma$ is connected if at least one of $\Gamma$ or $\Sigma$ is non-bipartite, and $\Gamma \times \Sigma$ has exactly two components if both $\Gamma$ and $\Sigma$ are bipartite (\cite{Weichsel62}, see also~\cite[Theorem~5.9]{HIK2011}).
\end{enumerate}
\end{lemma}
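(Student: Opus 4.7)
The plan is to reduce all three claims to two basic structural facts about the direct product: (i) a walk $(u_0,x_0),(u_1,x_1),\dots,(u_\ell,x_\ell)$ in $\Gamma\times\Sigma$ is the same data as a pair of walks $u_0,\dots,u_\ell$ in $\Gamma$ and $x_0,\dots,x_\ell$ in $\Sigma$ of common length $\ell$; and (ii) $N_{\Gamma\times\Sigma}((u,x))=N_\Gamma(u)\times N_\Sigma(x)$ for every $(u,x)\in V(\Gamma)\times V(\Sigma)$. Each part then follows from a short combinatorial argument using one of these facts.

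For part~(a), recall that a graph is non-bipartite if and only if it admits a closed walk of odd length. The forward direction is obtained by projecting: a closed odd walk in $\Gamma\times\Sigma$ projects coordinatewise to closed odd walks in $\Gamma$ and in $\Sigma$. For the converse, if $\Gamma$ has a closed walk of odd length $p$ based at some vertex $u$ and $\Sigma$ has one of odd length $q$ based at some vertex $x$, then traversing the former $q$ times in parallel with the latter $p$ times yields a closed walk in $\Gamma\times\Sigma$ based at $(u,x)$ of odd length $pq$.

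For part~(b) I would use observation~(ii). If $\Gamma$ is $R$-thick, witnessed by distinct $u,v$ with $N_\Gamma(u)=N_\Gamma(v)$, then for any $x\in V(\Sigma)$ the distinct vertices $(u,x)$ and $(v,x)$ have the same neighborhood $N_\Gamma(u)\times N_\Sigma(x)=N_\Gamma(v)\times N_\Sigma(x)$ in $\Gamma\times\Sigma$, so the product is $R$-thick; the same argument applies with the roles of $\Gamma$ and $\Sigma$ reversed. Conversely, if $(u,x)\neq(v,y)$ share a neighborhood in $\Gamma\times\Sigma$, then $N_\Gamma(u)\times N_\Sigma(x)=N_\Gamma(v)\times N_\Sigma(y)$; provided this common set is nonempty, one deduces $N_\Gamma(u)=N_\Gamma(v)$ and $N_\Sigma(x)=N_\Sigma(y)$, so the distinctness of $(u,x)$ and $(v,y)$ forces $R$-thickness of at least one factor. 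The empty-neighborhood corner case reduces to the observation that a single isolated vertex in $\Gamma$ together with $|V(\Sigma)|\geq 2$ already yields multiple isolated vertices of $\Gamma\times\Sigma$.

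For part~(c), which is Weichsel's theorem, the bipartite case is concrete: writing $V(\Gamma)=A\sqcup B$ and $V(\Sigma)=C\sqcup D$ for the bipartitions, no edge of $\Gamma\times\Sigma$ crosses the partition $V(\Gamma\times\Sigma)=((A\times C)\cup(B\times D))\sqcup((A\times D)\cup(B\times C))$, so the product has at least two components, and a parity-matched walk argument (using connectedness of both factors) shows each half is itself connected. In the non-bipartite case, say $\Gamma$ is non-bipartite, the standard fact that a connected non-bipartite graph admits walks of every sufficiently large length and of either parity between any two vertices allows us to inflate any walk from $u$ to $v$ in $\Gamma$ and any walk from $x$ to $y$ in $\Sigma$ to a common length, producing a walk from $(u,x)$ to $(v,y)$ in $\Gamma\times\Sigma$ via observation~(i). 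The main obstacle I anticipate is the careful parity-matching in this last step: verifying that the length inflation can be done uniformly and, when $\Sigma$ is bipartite, choosing the parity of walks in $\Gamma$ to match the fixed bipartition parity of walks in $\Sigma$ between the prescribed endpoints.
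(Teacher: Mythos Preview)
The paper does not actually prove this lemma; it simply records the three parts with citations to the literature (Exercise~8.13 of \cite{HIK2011}, Lemma~2.3 of \cite{QXZ2019}, and Weichsel's theorem \cite{Weichsel62}), so there is no authorial argument to compare against. Your sketches for parts~(a) and~(c) are the standard arguments and are correct as outlined.

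For part~(b) your main argument via $N_{\Gamma\times\Sigma}((u,x))=N_\Gamma(u)\times N_\Sigma(x)$ is right, but your treatment of the empty-neighborhood corner case is muddled: you write that an isolated vertex in $\Gamma$ together with $|V(\Sigma)|\geq 2$ ``yields multiple isolated vertices of $\Gamma\times\Sigma$,'' but in that direction you are trying to deduce $R$-thickness of a \emph{factor} from $R$-thickness of the product, not the other way around. In fact the biconditional as literally stated fails at that edge: $K_1\times K_2$ consists of two isolated vertices sharing the empty neighborhood and is therefore $R$-thick, while both $K_1$ and $K_2$ are $R$-thin. The cited source and the application in this paper concern connected graphs of order at least~$2$ (so no isolated vertices), and under that hypothesis the empty case cannot arise; you should state that hypothesis explicitly rather than try to argue around it.
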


We are now ready to complete the proof of Theorem~\ref{thm1}. 

\begin{proof}[Proof of part \emph{(b)}]
If $\Sigma$ is disconnected or $R$-thick, then by Definition~\ref{def-nontrivial-stable}, the graph pair $(\Gamma,\Sigma)$ cannot be nontrivially unstable.
For the rest of the proof, suppose that $\Sigma$ is connected and $R$-thin. Then the assumption in part (b) of Theorem~\ref{thm1} implies that $\Sigma$ is non-bipartite. Suppose for a contradiction that $(\Gamma,\Sigma)$ is nontrivially unstable. Then $\Gamma$ is connected and $R$-thin by Definition \ref{def-nontrivial-stable}.

If $\Gamma$ is non-bipartite, then we deduce from Lemma~\ref{cvn} that $\Gamma\times\Sigma$ is connected, $R$-thin and non-bipartite, and hence Theorem~\ref{classic} gives $\Aut(\Gamma\times\Sigma)\cong\Aut(\Gamma)\times\Aut(\Sigma)$, contradicting the assumption that $(\Gamma,\Sigma)$ is unstable. Thus $\Gamma$ is bipartite. 

Let $V(\Sigma)=\{1,\ldots,m\}$. By Lemma~\ref{Coprime}, there exists a non-diagonal $\Sigma$-automorphism of $\Gamma$, denoted by $(\alpha_1, \ldots,  \alpha_m)$. 
Since $\Sigma$ is non-bipartite and vertex-transitive, we derive that each vertex of $\Sigma$ lies in some odd cycle in $\Sigma$. 

Consider an arbitrary odd cycle $C=(i_1,\dots,i_s)$ in $\Sigma$, where $s\geq3$ is odd. Write $\beta_r=\alpha_{i_r}$ for $r\in\{1,\dots,s\}$ and count the subscripts of $\beta_r$ modulo $s$. According to the definition of a $\Sigma$-automorphism (see Definition \ref{def:sigma-auto}), for $u,v\in V(\Gamma)$, we have
\[
\{u, v\} \in E(\Gamma)\Leftrightarrow\{u^{\beta_r}, v^{\beta_{r+1}}\} \in E(\Gamma)\Leftrightarrow\{u^{\beta_{r+1}}, v^{\beta_{r+2}}\} \in E(\Gamma).
\]
In other words, both $(\beta_r,\beta_{r+1})$ and $(\beta_{r+1},\beta_{r+2})$ are two-fold automorphisms of $\Gamma$. Then by parts (a) and (b) of Lemma~\ref{R-thick} we conclude that $(1,\beta_{r+2}\beta_r^{-1})$ is a two-fold automorphisms of $\Gamma$. Since $\Gamma$ is $R$-thin, part (c) of Lemma~\ref{R-thick} implies that $\beta_{r+2}\beta_r^{-1}=1$, that is, $\beta_{r+2}=\beta_r$. Since this holds for every $r\in\{1,\dots,s\}$ and $s$ is odd, it follows that $\beta_1=\dots=\beta_s$.
Consequently, we may assign a permutation $\alpha_C$ to the odd cycle $C$ such that $\alpha_C=\alpha_{i_1}=\dots=\alpha_{i_s}$. 

Define two odd cycles $C$ and $D$ in $\Sigma$ to be equivalent if $\alpha_C=\alpha_D$.
Under this equivalence relation, the set of odd cycles in $\Sigma$ are partitioned into equivalence classes $\calC_1,\dots,\calC_t$. 
For $i\in\{1,\dots,t\}$, let $V_i$ be the union of the vertex sets of the cycles in $\calC_i$ and let $\sigma_i$ be the assigned permutation to any of the cycles in $\calC_i$. Then $\alpha_k=\sigma_i$ for all $k\in V_i$, and $\sigma_i\neq\sigma_j$ for $i\neq j$. 
As a consequence, $V_i\cap V_j=\emptyset$ for $i\neq j$. Since every vertex of $\Sigma$ lies in some odd cycle in $\Sigma$, the set $V(\Sigma)$ is covered by $\{V_1,\dots,V_t\}$, and so $\{V_1,\dots,V_t\}$ is a partition of $V(\Sigma)$. Since $(\alpha_1,\ldots,\alpha_m)$ is non-diagonal, we have $t\geq2$.
Now the connectivity of $\Sigma$ implies that there exist distinct $i,j\in\{1,\dots,t\}$ and vertices $x\in V_i$ and $y\in V_j$ with $\{x,y\}\in E(\Sigma)$.
By Definition \ref{def:sigma-auto}, for $u,v\in V(\Gamma)$, we have
\[
\{u, v\} \in E(\Gamma)\Leftrightarrow\{u^{\alpha_x}, v^{\alpha_y}\} \in E(\Gamma).
\]
As $\alpha_x=\sigma_i$, $\alpha_y=\sigma_j$ and $\sigma_i\neq\sigma_j$, it follows that $(\sigma_i,\sigma_j)$ is a nontrivial two-fold automorphism of $\Gamma$.
Let $\{z,w\}$ be an edge of any cycle in $\calC_i$. Again, by Definition \ref{def:sigma-auto}, for $u,v\in V(\Gamma)$, we have
\[
\{u, v\} \in E(\Gamma)\Leftrightarrow\{u^{\alpha_z}, v^{\alpha_w}\} \in E(\Gamma).
\]
Since $\alpha_z=\alpha_w=\sigma_i$, this means that $\sigma_i\in\Aut(\Gamma)$. So part (d) of Lemma~\ref{R-thick} yields that $\Gamma$ is $R$-thick, a contradiction. 
\end{proof}

\section{Applications and concluding remarks}\label{sec-application}

\begin{proposition}\label{K_m}
Let $m\geq3$ be an integer, and let $\Gamma$ be a graph of valency coprime to $m-1$ such that $\Aut(\Gamma)\neq1$. Then the following statements hold:
\begin{enumerate}[{\rm (a)}]
\item If $\Gamma$ is connected and $R$-thin, then $(\Gamma,K_m)$ is stable.
\item If $\Gamma$ is disconnected or $R$-thick, then $(\Gamma,K_m)$ is trivially unstable. 
\end{enumerate}
\end{proposition}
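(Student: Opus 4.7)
The plan is to apply Theorem~\ref{thm1} to $\Sigma=K_m$. For $m\geq 3$ the graph $K_m$ is vertex-transitive, connected, $R$-thin (each vertex $v$ is determined by $N_{K_m}(v)=V(K_m)\setminus\{v\}$), and non-bipartite (it contains a triangle). Moreover, since $K_m$ is non-bipartite and its valency $m-1$ is coprime to the valency of $\Gamma$, the remark in the introduction gives that $\Gamma$ and $K_m$ are coprime.

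For part~(a), since $K_m$ is vertex-transitive and non-bipartite, Theorem~\ref{thm1}(b) implies that $(\Gamma,K_m)$ cannot be nontrivially unstable. The extra hypotheses of~(a) say that $\Gamma$ is connected and $R$-thin; combined with the properties of $K_m$ noted above, every requirement of Definition~\ref{def-nontrivial-stable} for nontrivial instability is met. Hence any instability of $(\Gamma,K_m)$ would automatically be nontrivial, contradicting the previous sentence, and we conclude that $(\Gamma,K_m)$ is stable.

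For part~(b), Definition~\ref{def-nontrivial-stable} already forbids nontrivial instability because $\Gamma$ fails either connectedness or $R$-thinness, so the real content is to exhibit instability. If $\Gamma$ is $R$-thick, pick distinct $u,v$ with $N_\Gamma(u)=N_\Gamma(v)$, set $\tau=(u\,v)$, and verify that $(\tau,1,\ldots,1)$ is a non-diagonal $K_m$-automorphism of $\Gamma$: the only edges of $K_m$ to test are those incident to the first vertex, on which the required condition reduces to $N_\Gamma(u)=N_\Gamma(v)$. Lemma~\ref{nontrivial} then yields instability.

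If $\Gamma$ is disconnected, the analogous $(\alpha,1,\ldots,1)$ trick fails, since a component-swapping $\alpha$ breaks intra-component edges when paired with the identity across a $K_m$-edge. Instead I would build an automorphism of $\Gamma\times K_m$ outside $\Aut(\Gamma)\times\Aut(K_m)$ directly. Write $\Gamma=\Gamma_1\sqcup\cdots\sqcup\Gamma_k$ with $k\geq 2$. If two components are isomorphic via $\phi:\Gamma_1\to\Gamma_2$, fix any $\pi\in\Aut(K_m)\setminus\{1\}$ (which exists since $m\geq 3$) and define $f$ to be $(u,i)\mapsto(\phi(u),\pi(i))$ on $V(\Gamma_1)\times V(K_m)$, $(u,i)\mapsto(\phi^{-1}(u),i)$ on $V(\Gamma_2)\times V(K_m)$, and the identity elsewhere. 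Otherwise all components are pairwise non-isomorphic, so $\Aut(\Gamma)\cong\prod_i\Aut(\Gamma_i)$ and the hypothesis $\Aut(\Gamma)\neq 1$ supplies $\sigma\in\Aut(\Gamma_\ell)\setminus\{1\}$ for some $\ell$; with $\pi\in\Aut(K_m)\setminus\{1\}$, define $f$ to be $(u,i)\mapsto(\sigma(u),\pi(i))$ on $V(\Gamma_\ell)\times V(K_m)$ and the identity elsewhere. Using that each $V(\Gamma_s)\times V(K_m)$ is a union of connected components of $\Gamma\times K_m$, a short case analysis shows $f\in\Aut(\Gamma\times K_m)$; and $f\notin\Aut(\Gamma)\times\Aut(K_m)$, because the induced permutation of $V(K_m)$ would have to be $\pi\neq 1$ on one block of first coordinates and the identity on another. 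The main obstacle is exactly this disconnected subcase: Lemma~\ref{nontrivial} is not directly applicable, so one has to construct the witnessing automorphism of $\Gamma\times K_m$ by hand and carefully confirm that it escapes the product subgroup.
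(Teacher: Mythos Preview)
Your argument for part~(a) is exactly the paper's: invoke Theorem~\ref{thm1}(b) to rule out nontrivial instability, then observe that all the conditions in Definition~\ref{def-nontrivial-stable} are met, so stability follows.

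For part~(b) your constructions are correct, but you are working much harder than necessary. The paper simply appeals to Theorem~\ref{TriviallyStable}: since $\Aut(\Gamma)\neq 1$ and $\Aut(K_m)\neq 1$ (as $m\geq 3$), that theorem says that stability of $(\Gamma,K_m)$ would force both $\Gamma$ and $K_m$ to be connected and $R$-thin. Hence if $\Gamma$ is disconnected or $R$-thick, $(\Gamma,K_m)$ is unstable; and it is \emph{trivially} unstable straight from Definition~\ref{def-nontrivial-stable}. No explicit $\Sigma$-automorphism or hand-built automorphism of $\Gamma\times K_m$ is needed. Your approach has the merit of being self-contained---it effectively reproves the relevant special case of Theorem~\ref{TriviallyStable}---but since that theorem is already available in the paper, quoting it gives a two-line proof in place of your case analysis on components.
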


\begin{proof}
Since the complete graph $K_m$ is vertex-transitive and non-bipartite with valency $m-1$, we conclude from part (b) of Theorem~\ref{thm1} that the graph pair $(\Gamma,K_m)$ cannot be nontrivially unstable. In view of Definition~\ref{def-nontrivial-stable}, this implies that if $\Gamma$ is connected and $R$-thin, then $(\Gamma,K_m)$ is stable. On the other hand, if $\Gamma$ is disconnected or $R$-thick, then $(\Gamma,K_m)$ is trivially unstable by Theorem~\ref{TriviallyStable} and Definition~\ref{def-nontrivial-stable}.
\end{proof}

Proposition~\ref{K_m} is not true if the valency of $\Gamma$ is not coprime to $m-1$. For example, let $\Gamma=K_m\times K_2$. Then $\Gamma$ is a connected $R$-thin graph of valency $m-1$. However, $(\Gamma,K_m)$ is trivially unstable as $\Gamma$ and $K_m$ are not coprime. 
Note also that we do need $m\geq3$ in Proposition~\ref{K_m}, as any nontrivially unstable graph $\Gamma$ with $\Aut(\Gamma)\neq1$ would satisfy the assumption but not the conclusion of part~(b) if $m=2$ in Proposition~\ref{K_m}.

Since $K_m$ is non-bipartite when $m \ge 3$, the condition that $\Gamma$ is of valency coprime to $m-1$ in Proposition~\ref{K_m} implies that $\Gamma$ and $K_m$ are coprime. So it is natural to ask whether the results in Proposition~\ref{K_m} are still true if this condition is replaced by the weaker condition that $\Gamma$ and $K_m$ are coprime. In fact, by Theorem~\ref{TriviallyStable} and Definition~\ref{def-nontrivial-stable}, we know that part (b) of Proposition~\ref{K_m} is true if $\Gamma$ and $K_m$ ($m \geq 3$) are coprime and $\Aut(\Gamma)\neq1$. We conjecture that part (a) of Proposition~\ref{K_m} is also true under the same condition.

\begin{conjecture}\label{conjecture}
Let $m\geq 3$ be an integer, and let $\Gamma$ be a graph coprime to $K_m$ such that $\Aut(\Gamma)\neq1$. If $\Gamma$ is connected and $R$-thin, then $(\Gamma,K_m)$ is stable.
\end{conjecture}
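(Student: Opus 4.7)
The overall strategy is to mirror the proof of part~(b) of Theorem~\ref{thm1} as closely as possible, while navigating the absence of the coprime valencies hypothesis. Under the assumptions of the conjecture, $\Gamma$ and $K_m$ are connected coprime $R$-thin graphs and $K_m$ is non-bipartite (since $m\geq 3$); by Definition~\ref{def-nontrivial-stable}, any instability of $(\Gamma,K_m)$ would therefore be nontrivial, a possibility we plan to exclude by contradiction. As a first reduction, if $\Gamma$ is itself non-bipartite, then Lemma~\ref{cvn} ensures that $\Gamma\times K_m$ is connected, non-bipartite and $R$-thin, so Theorem~\ref{classic} yields the stability of $(\Gamma,K_m)$ at once. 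We may therefore assume from now on that $\Gamma$ is bipartite and that $(\Gamma,K_m)$ is nontrivially unstable.

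The plan then has two steps. Step~1, which we expect to be the main obstacle, is to establish a bipartite analogue of Lemma~\ref{Coprime}: from the assumed nontrivial instability of $(\Gamma,K_m)$ produce a non-diagonal $K_m$-automorphism $(\alpha_1,\dots,\alpha_m)$ of $\Gamma$. Granting Step~1, Step~2 runs exactly as in the proof of part~(b) of Theorem~\ref{thm1}, simplified by the fact that every triple of distinct vertices of $K_m$ spans a triangle. Concretely, for any distinct $i,j,k\in V(K_m)$ both $(\alpha_i,\alpha_j)$ and $(\alpha_j,\alpha_k)$ are two-fold automorphisms of $\Gamma$, so parts~(a) and~(b) of Lemma~\ref{R-thick} yield that $(1,\alpha_k\alpha_i^{-1})$ is a two-fold automorphism; part~(c) of Lemma~\ref{R-thick}, combined with the $R$-thinness of $\Gamma$, then forces $\alpha_k=\alpha_i$. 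Since $m\geq 3$ supplies such an auxiliary $j$ for every pair $i\neq k$, all the permutations $\alpha_r$ coincide, contradicting non-diagonality.

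The crux is Step~1. In \cite{QXZZ2021} the coprime valencies assumption was used to force a rigid block structure on any automorphism of $\Gamma\times K_m$; here we have only the graph-theoretic coprimality of $\Gamma$ and $K_m$, and $\Gamma$ need not even be regular. The natural attack is to examine how an arbitrary $\phi\in\Aut(\Gamma\times K_m)$ acts on the two canonical partitions of $V(\Gamma\times K_m)$, namely the $\Gamma$-fibers $\{u\}\times V(K_m)$ and the $K_m$-fibers $V(\Gamma)\times\{i\}$. A short calculation using the connectivity of $\Gamma$ shows that if $\phi$ preserves the $\Gamma$-fiber system then $\phi\in\Aut(\Gamma)\times\Aut(K_m)$; while if $\phi$ preserves the $K_m$-fiber system then $\phi$ is encoded by a $K_m$-automorphism $(\sigma_1,\dots,\sigma_m)$ of $\Gamma$, non-diagonal precisely when $\phi\notin\Aut(\Gamma)\times\Aut(K_m)$. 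It would therefore suffice to prove that every automorphism of $\Gamma\times K_m$ preserves at least one of the two fiber systems, ideally via a graph-theoretic invariant of $\Gamma\times K_m$ that characterizes one of them and is preserved by $\Aut(\Gamma\times K_m)$. The delicate point is that, as the \emph{non-coprime} example $C_6\times K_3$ shows (with $C_6\cong K_2\times K_3$ sharing a factor with $K_3$), bipartite $\Gamma$ can admit independent sets of size $m$ whose local combinatorial profile mimics that of a $\Gamma$-fiber; it is precisely the coprimality of $\Gamma$ with $K_m$, together with the $R$-thinness of $\Gamma$, that should exclude such pathologies, and making this exclusion rigorous appears to be the heart of the problem.
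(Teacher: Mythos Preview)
The statement you are attempting to prove is presented in the paper as an open \emph{conjecture}, not a theorem; the paper offers no proof, only computational evidence for small parameters and the observation that the non-bipartite case follows immediately from Theorem~\ref{classic} via Lemma~\ref{cvn}. So there is no ``paper's own proof'' to compare against.

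Your proposal correctly reproduces the easy non-bipartite reduction (matching exactly what the paper says is ``not difficult to see''), and your Step~2 is sound: given a non-diagonal $K_m$-automorphism of an $R$-thin $\Gamma$, the triangle argument with Lemma~\ref{R-thick} forces all $\alpha_i$ to coincide. However, Step~1 is not a proof but an honest identification of the obstruction. You correctly isolate that Lemma~\ref{Coprime} (from \cite{QXZZ2021}) relies on the coprime \emph{valencies} hypothesis to control how automorphisms of $\Gamma\times K_m$ interact with the fiber systems, and that replacing this with mere graph-theoretic coprimality of $\Gamma$ and $K_m$ leaves a genuine gap. Your closing paragraph is a fair description of where the difficulty lies, but it does not close it: you have not exhibited an invariant that forces an arbitrary automorphism of $\Gamma\times K_m$ to preserve one of the two fiber systems when $\Gamma$ is bipartite and merely coprime to $K_m$. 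In short, your proposal is a reasonable strategic outline, consistent with the paper's assessment that the conjecture is open, but it is not a proof.
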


All connected arc-transitive graphs of order $2$ to $47$ are known (see the list \url{https://www.math.auckland.ac.nz/~conder/symmetricgraphs-orderupto47-byedges.txt} constructed by M. Conder). Computing in Magma shows that Conjecture \ref{conjecture} is true when $\Gamma$ is one of these graphs and $3 \le m \le 25$, or $\Gamma$ is a connected arc-transitive graph with order at most $31$ and $3 \le m \le 100$. Our computing also shows that Conjecture \ref{conjecture} is true when $\Gamma$ is any connected bipartite graph of order at most $47$ in the database \url{https://hog.grinvin.org} and $3 \le m \le 100$. It is not difficult to see that this conjecture is true if $\Gamma$ is non-bipartite and $m\geq 3$.

\begin{proposition}\label{C_m}
Let $m\geq3$ be an integer, and let $\Gamma$ be a connected $R$-thin graph of odd valency such that $\Aut(\Gamma)\neq1$. Then the following statements hold:
\begin{enumerate}[{\rm (a)}]
\item If $m$ is odd, then $(\Gamma,C_m)$ is stable.
\item If $m=4$, then $(\Gamma,C_m)$ is trivially unstable. 
\item If $m\geq6$ is even, then $(\Gamma,C_m)$ is stable or nontrivially unstable or trivially unstable, respectively, if and only if $\Gamma$ is stable or nontrivially unstable or trivially unstable. 
\end{enumerate}
\end{proposition}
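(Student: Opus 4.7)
The plan is to apply Theorem~\ref{thm1} with $\Sigma=C_m$, noting that $C_m$ is vertex-transitive of valency~$2$, which is coprime to the odd valency of $\Gamma$, so the hypotheses of Theorem~\ref{thm1} are satisfied throughout. The three parts correspond to the three structural possibilities for $C_m$: non-bipartite ($m$ odd), $R$-thick ($m=4$), and connected, $R$-thin, bipartite ($m\ge 6$ even). In each case I combine Theorem~\ref{thm1} with Definition~\ref{def-nontrivial-stable} and Theorem~\ref{TriviallyStable} to determine the status of $(\Gamma,C_m)$.

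For part~(a), $C_m$ is connected, $R$-thin and non-bipartite, so Theorem~\ref{thm1}(b) rules out nontrivial instability of $(\Gamma,C_m)$. To exclude trivial instability I verify the four conditions of Definition~\ref{def-nontrivial-stable}: the coprime valencies together with $C_m$ non-bipartite give coprimality of the graphs, both are connected and $R$-thin, and $C_m$ is non-bipartite. Hence $(\Gamma,C_m)$ is stable. Part~(b) is immediate from Theorem~\ref{TriviallyStable}: since $C_4$ is $R$-thick (opposite vertices share a neighborhood), $(\Gamma,C_4)$ cannot be stable; the $R$-thickness also violates the $R$-thinness requirement in Definition~\ref{def-nontrivial-stable}, so $(\Gamma,C_4)$ is trivially unstable.

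For part~(c) with $m\ge 6$ even, $C_m$ is connected, $R$-thin and bipartite, so Theorem~\ref{thm1}(a) yields the equivalence of nontrivial instability between $(\Gamma,C_m)$ and $\Gamma$. I then split on whether $\Gamma$ is bipartite. If $\Gamma$ is non-bipartite, all four conditions of Definition~\ref{def-nontrivial-stable} are met by both $(\Gamma,K_2)$ and $(\Gamma,C_m)$, so neither $\Gamma$ nor $(\Gamma,C_m)$ can be trivially unstable; combined with the nontrivial-instability equivalence, this matches ``$\Gamma$ stable iff $(\Gamma,C_m)$ stable''. If $\Gamma$ is bipartite, then the canonical double cover decomposes as $\Gamma\times K_2\cong 2\Gamma$, giving $|\Aut(\Gamma\times K_2)|=2|\Aut(\Gamma)|^2>2|\Aut(\Gamma)|=|\Aut(\Gamma)\times\Aut(K_2)|$ since $\Aut(\Gamma)\neq 1$, so $\Gamma$ is trivially unstable. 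Taking a nontrivial two-fold automorphism $(\alpha,\beta)$ of $\Gamma$ (which exists by Lemma~\ref{TF}) and assigning it across the bipartition of $C_m$, exactly as in the proof of Theorem~\ref{thm1}(a), I obtain a non-diagonal $C_m$-automorphism of $\Gamma$; Lemma~\ref{nontrivial} together with the bipartiteness of both graphs then gives that $(\Gamma,C_m)$ is trivially unstable.

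The main obstacle is the bipartite sub-case of part~(c): Lemma~\ref{Coprime} is not available there, since it requires at least one of $\Gamma$ and $\Sigma$ to be non-bipartite, so the instability of $(\Gamma,C_m)$ must be produced directly from a non-diagonal $C_m$-automorphism of $\Gamma$ via Lemma~\ref{nontrivial}. This forces one to first establish that $\Gamma$ itself is unstable (via the canonical-double-cover count), and then use the resulting two-fold automorphism to feed the $C_m$-automorphism construction.
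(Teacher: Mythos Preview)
Your proof is correct and follows the paper's overall strategy: parts~(a) and~(b) match the paper's argument exactly, and in part~(c) you obtain the nontrivial-instability equivalence from Theorem~\ref{thm1}(a) just as the paper does.

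The only difference is in the bipartite sub-case of part~(c). You flag Lemma~\ref{Coprime} as unavailable and therefore rebuild the instability of $(\Gamma,C_m)$ by hand: first computing $|\Aut(\Gamma\times K_2)|$ from $\Gamma\times K_2\cong 2\Gamma$ to show $\Gamma$ is unstable, then transporting the resulting two-fold automorphism across the bipartition of $C_m$ and applying Lemma~\ref{nontrivial}. This works, but the paper avoids all of it by invoking Theorem~\ref{TriviallyStable} directly: since both $\Gamma$ and $C_m$ are bipartite and both have nontrivial automorphism groups, $(\Gamma,C_m)$ cannot be stable, and the failure of the ``at least one non-bipartite'' condition in Definition~\ref{def-nontrivial-stable} then forces trivial instability (and the same argument applied to $(\Gamma,K_2)$ gives that $\Gamma$ is trivially unstable). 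So the obstacle you identify is real for Lemma~\ref{Coprime}, but Theorem~\ref{TriviallyStable} already handles the trivially-unstable side without any explicit construction.
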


\begin{proof}
First suppose that $m$ is odd. In this case, $C_m$ is a connected $R$-thin vertex-transitive non-bipartite graph of valency $2$. Since the valency of $\Gamma$ is odd,  $(\Gamma,C_m)$ cannot be nontrivially unstable by part (b) of Theorem~\ref{thm1}. Since $\Gamma$ is connected and $R$-thin, it follows from Definition~\ref{def-nontrivial-stable} that $(\Gamma,C_m)$ is stable, as part~(a) asserts.  

Next suppose that $m=4$. Then $C_m$ is $R$-thick, and so $(\Gamma,\Sigma)$ is trivially unstable by Theorem~\ref{TriviallyStable} and Definition~\ref{def-nontrivial-stable}. This proves part~(b).

Finally, suppose that $m\geq6$ is even. Then $C_m$ is a connected $R$-thin vertex-transitive bipartite graph of valency $2$. Since $\Gamma$ has odd valency, part (a) of Theorem~\ref{thm1} shows that $(\Gamma,C_m)$ is nontrivially unstable if and only if $\Gamma$ is nontrivially unstable. Moreover, it follows from Theorem~\ref{TriviallyStable} and Definition~\ref{def-nontrivial-stable} that $(\Gamma,C_m)$ is trivially unstable if and only if $\Gamma$ is disconnected or $R$-thick or bipartite, which happens exactly when $\Gamma$ is trivially unstable. This completes the proof of part~(c).
\end{proof}


Note that the condition that $\Gamma$ is of odd valency in Proposition~\ref{C_m} ensures that $\Gamma$ and $C_m$ are coprime. It is natural to ask whether Proposition~\ref{C_m} is still true for graphs $\Gamma$ of even valency under the additional condition that $\Gamma$ and $C_m$ are coprime. In particular, we pose the following question. 

\begin{question}
For a stable graph $\Gamma$ and an even integer $m\geq 6$, under what condition is $(\Gamma,C_m)$ nontrivially unstable?
\end{question}


Theorem~\ref{thm1} relates the nontrivial instability of $(\Gamma,\Sigma)$ to that of $\Gamma$ in the case when $\Gamma$ and $\Sigma$ are of coprime valencies and $\Sigma$ is vertex-transitive. It would be interesting to study when a similar relation exists if the valencies of $\Gamma$ and $\Sigma$ are not coprime or $\Sigma$ is not vertex-transitive. In general, we pose the following question.

\begin{question}
Let $\Gamma$ and $\Sigma$ be regular graphs. Under what condition does it hold that $(\Gamma, \Sigma)$ is nontrivially unstable if and only if $\Gamma$ is nontrivially unstable?
\end{question}

\vskip0.1in
\noindent\textsc{Acknowledgements.}  
 The first author was supported by the National Natural Science Foundation of China (12101421).


\end{document}